\newtheorem{Theorem}{Theorem}[section]
\newtheorem{Lemma}[Theorem]{Lemma}
\theoremstyle{definition}
\newtheorem{Example}[Theorem]{Example}
\theoremstyle{remark}
\newtheorem{Remark}[Theorem]{Remark}
\def\@thmcountersep{-}
\numberwithin{equation}{section}
\begin{document}

\title[Converses to generalized Conway--Gordon type congruences]{Converses to generalized Conway--Gordon type congruences}

\author{Ryo Nikkuni}
\address{Department of Mathematics, School of Arts and Sciences, Tokyo Woman's Christian University, 2-6-1 Zempukuji, Suginami-ku, Tokyo 167-8585, Japan}
\email{nick@lab.twcu.ac.jp}
\thanks{The author was supported by JSPS KAKENHI Grant Number JP22K03297.}


\subjclass{Primary 57M15; Secondary 57K10}

\date{}

\dedicatory{This article is dedicated to Professor Kouki Taniyama on his 60th birthday.}

\keywords{Spatial graphs, Conway--Gordon theorem}

\begin{abstract}
It is known that for every spatial complete graph on $n\ge 7$ vertices, the summation of the second coefficients of the Conway polynomials over the Hamiltonian knots is congruent to $r_{n}$ modulo $(n-5)!$, where $r_{n} = (n-5)!/2$ if $n=8k,8k+7$, and $0$ if $n\neq 8k,8k+7$. In particular the case of $n=7$ is famous as the Conway--Gordon $K_{7}$ theorem. In this paper, conversely, we show that every integer $(n-5)! q + r_{n}$ is realized as the summation of the second coefficients of the Conway polynomials over the Hamiltonian knots in some spatial complete graph on $n$ vertices. 
\end{abstract}

\maketitle

\section{Introduction}\label{intro}

Throughout this paper we work in the piecewise linear category. An embedding $f$ of a finite graph $G$ into ${\mathbb R}^{3}$ is called a {\it spatial embedding} of $G$, and the image $f(G)$ is called a {\it spatial graph} of $G$. We say that two spatial graphs are {\it ambient isotopic} if there exists an orientation-preserving self-homeomorphism $\Phi$ on ${\mathbb R}^{3}$ which sends one to the other. We call a subgraph $\gamma$ of $G$ homeomorphic to the circle a {\it cycle} of $G$. We denote the set of all cycles of $G$ by $\Gamma(G)$. A cycle of $G$ is called a {\it $p$-cycle} if it contains exactly $p$ vertices, and also called a {\it Hamiltonian cycle} if it contains all vertices of $G$. We often denote an edge of $G$ connecting the vertices $u$ and $v$ by $\overline{uv}$, and also denote a $p$-cycle $\overline{v_{1}v_{2}}\cup \overline{v_{2}v_{3}}\cup \cdots \cup \overline{v_{p-1}v_{p}}\cup \overline{v_{p}v_{1}}$ of $G$ by $[v_{1} v_{2} \cdots v_{p}]$. We denote the set of all $p$-cycles of $G$ by $\Gamma_{p}(G)$, and the set of all pairs of two disjoint cycles of $G$ consisting of a $p$-cycle and a $q$-cycle by $\Gamma_{p,q}(G)$. For a cycle $\gamma$ (resp. a pair of disjoint cycles $\lambda$) of $G$ and a spatial graph $f(G)$, $f(\gamma)$ (resp. $f(\lambda)$) is none other than a knot (resp. $2$-component link) in $f(G)$. In particular for a Hamiltonian cycle $\gamma$ of $G$, we also call $f(\gamma)$ a {\it Hamiltonian knot} in $f(G)$.

Let $K_{n}$ be the {\it complete graph} on $n$ vertices, that is the graph consisting of $n$ vertices $1,2,\ldots,n$ such that each pair of its distinct vertices $i$ and $j$ is connected by exactly one edge $\overline{ij}$. Then, in 1983, the following congruence was given, which is well-known as the {\it Conway--Gordon $K_{7}$ theorem}.

\begin{Theorem}\label{CG1} {\rm (Conway--Gordon \cite{CG83})}
For any spatial embedding $f$ of $K_{7}$, we have 
\begin{eqnarray}
\sum_{\gamma\in \Gamma_{7}(K_{7})}a_{2}(f(\gamma)) \equiv 1 \pmod{2}, \label{cgmod2}
\end{eqnarray}
where $a_{2}$ denotes the second coefficient of the {\it Conway polynomial}. 
\end{Theorem}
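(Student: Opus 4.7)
The plan is to prove the congruence in two stages: first show that the parity of the sum is independent of the spatial embedding, then evaluate it for one explicit embedding.

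\textbf{Invariance under crossing changes.} Any two spatial embeddings of $K_{7}$ are related by ambient isotopy together with a finite sequence of crossing changes between pairs of vertex-disjoint edges. Since $a_{2}$ is an ambient-isotopy invariant of knots, it suffices to analyze a single crossing change. Let $f$ and $f'$ differ at one crossing between disjoint edges $e_{1}=\overline{v_{1}v_{2}}$ and $e_{2}=\overline{v_{3}v_{4}}$. For any $\gamma \in \Gamma_{7}(K_{7})$ that does not contain both $e_{1}$ and $e_{2}$, $f(\gamma)=f'(\gamma)$ as knots. For $\gamma$ containing both edges, the Conway skein relation $a_{2}(K_{+})-a_{2}(K_{-})=\mathrm{lk}(L_{0})$ gives $a_{2}(f(\gamma))-a_{2}(f'(\gamma)) = \pm\, \mathrm{lk}(L_{\gamma})$, where $L_{\gamma}$ is the 2-component link obtained by oriented smoothing of the crossing. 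Therefore
\[
\sum_{\gamma \in \Gamma_{7}(K_{7})} a_{2}(f(\gamma)) - \sum_{\gamma \in \Gamma_{7}(K_{7})} a_{2}(f'(\gamma)) \;=\; \sum_{\gamma \supset e_{1}\cup e_{2}} \pm\, \mathrm{lk}(L_{\gamma}),
\]
and the goal is to show the right-hand side is even.

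\textbf{Parity of the change.} The Hamiltonian cycles of $K_{7}$ containing both $e_{1}$ and $e_{2}$ number $48$; they are enumerated by ordered partitions of $\{v_{5},v_{6},v_{7}\}$ together with the internal orderings of each part. After a small ambient isotopy supported near the crossing (exploiting that $K_{7}$ is complete, so the smoothing arcs can be replaced by the actual edges of $f(K_{7})$ joining the smoothing endpoints), each $L_{\gamma}$ is isotopic to the $f$-image of a pair $(\gamma_{1},\gamma_{2}) \in \Gamma_{3,4}(K_{7})$. The decisive combinatorial step is to pair up these $48$ Hamiltonian cycles so that the two members of each pair yield the same element of $\Gamma_{3,4}(K_{7})$ but with opposite signs in the skein formula. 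That forces the signed sum to be even, so the parity of $\sum_{\gamma} a_{2}(f(\gamma))$ is an invariant of $K_{7}$, independent of $f$.

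\textbf{Evaluation on an explicit embedding.} For a book embedding $f_{0}$ of $K_{7}$ -- vertices placed on the $x$-axis and edges drawn as semicircles alternately above and below -- a finite diagrammatic check over the $360$ Hamiltonian cycles exhibits a single trefoil among the Hamiltonian knots and $359$ unknots, so $\sum_{\gamma} a_{2}(f_{0}(\gamma))\equiv 1 \pmod{2}$. Combined with the invariance above, this proves the theorem.

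The main obstacle is the parity cancellation in the second paragraph: one must exhibit an explicit sign-reversing involution on the $48$ Hamiltonian cycles through $e_{1}$ and $e_{2}$, or equivalently verify that each pair in $\Gamma_{3,4}(K_{7})$ arising from the smoothing is counted with net even multiplicity. The evaluation on the book embedding is finite but laborious.
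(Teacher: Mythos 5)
The paper does not prove this statement: it is quoted from Conway--Gordon \cite{CG83}, and within the paper's own framework it is a consequence of the integral formula of Theorem \ref{gcgthm} with $n=7$ (reduce modulo $2$ and invoke the $K_{6}$ linking congruence of Remark \ref{sko}, or simply take $n=7$ in Theorem \ref{maincor0}). Your overall strategy --- show the parity is a crossing-change invariant, then evaluate it on one embedding --- is the original Conway--Gordon strategy, and your evaluation step is correct in content: the canonical book presentation of $K_{7}$ contains exactly one nontrivial Hamiltonian knot, a trefoil, which matches the computation $c_{7}=\frac{2!}{2}\bigl(\binom{7}{6}-\binom{6}{5}\bigr)=1$ of Example \ref{srex}.

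The invariance step, however, has a genuine gap, concentrated exactly where you flag ``the decisive combinatorial step.'' First, the identification of $L_{\gamma}$ with the $f$-image of a pair in $\Gamma_{3,4}(K_{7})$ is not available: each component of the oriented smoothing consists of a path of $\gamma$ closed up by an arc that runs along part of $f(e_{1})$, jumps across the crossing point, and runs along part of $f(e_{2})$; replacing that arc by the actual edge of $f(K_{7})$ joining its endpoints is not an ambient isotopy and changes the linking number by an uncontrolled integer. Second, the combinatorics does not match: writing $\gamma=e_{1}\cup P\cup e_{2}\cup Q$, the three vertices outside $e_{1}\cup e_{2}$ split between $P$ and $Q$ as $(0,3)$, $(1,2)$, $(2,1)$ or $(3,0)$, so $24$ of the $48$ cycles produce a smoothed component passing through only two vertices (a bigon through the crossing point), which is not an element of $\Gamma_{3,4}(K_{7})$ at all. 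Third, a ``sign-reversing involution'' cannot be the mechanism: the sign in the skein relation is determined by the orientation with which $\gamma$ traverses the changed crossing, and modulo $2$ signs are irrelevant anyway --- what must be proved is that $\sum_{\gamma}\mathrm{lk}(L_{\gamma})$ is even, which in the classical argument is done by expanding $\mathrm{lk}\bmod 2$ as a sum of mod-$2$ crossing numbers over pairs of disjoint edges and checking that each such pair is counted an even number of times. Finally, you assume all crossing changes occur between vertex-disjoint edges; self-crossings of a single edge and crossings between adjacent edges do occur (for instance when an edge is locally knotted) and require a separate, and not entirely trivial, argument.
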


The modulo two reduction of $a_{2}(K)$ for an oriented knot $K$ is also called the {\it Arf invariant} of $K$. Theorem \ref{CG1} says that a spatial graph $f(K_{7})$ always contains a nontrivial Hamiltonian knot with Arf invariant one. Even when $n\ge 8$, by applying Theorem \ref{CG1}, Blain et al. and Hirano showed that a spatial graph $f(K_{n})$ always contains a large number of such nontrivial Hamiltonian knots \cite{BBFHL07}, \cite{Hirano10}. 

On the other hand, in 2009, an integral lift of (\ref{cgmod2}) was given by the author \cite{Nikkuni09}, and further extended to $K_{n}$ with $n\ge 8$ by Morishita and the author in 2019 \cite{MN19} (See Theorem \ref{gcgthm}). As a consequence, an extension of (\ref{cgmod2}) to general complete graphs was also given as follows, that is a constraint on the set of all Hamiltonian knots in a spatial complete graph with arbitrary number of vertices.\footnote{The case of $n=8$ had previously been obtained by combining the results of Foisy \cite{F08} and Hirano \cite{HiranoD}.}

\begin{Theorem}\label{maincor0} {\rm (Morishita--Nikkuni \cite{MN19})} 
Let $n\ge 7$ be an integer. For any spatial embedding $f$ of $K_{n}$, we have 
\begin{eqnarray}\label{mncongru}
\sum_{\gamma\in \Gamma_{n}(K_{n})}a_{2}(f(\gamma)) \equiv r_{n}\pmod{(n-5)!}, 
\end{eqnarray}
where 
\begin{eqnarray*}
r_{n} = 
\left\{
   \begin{array}{@{\,}lll}
   {\displaystyle \frac{(n-5)!}{2}} & (n\equiv 0,7\pmod{8}) \\
   0 & (n\not\equiv 0,7\pmod{8}).  
   \end{array}
\right.
\end{eqnarray*}
\end{Theorem}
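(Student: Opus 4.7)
The plan is to derive (\ref{mncongru}) as a direct consequence of an exact integral lift of the form
$$\sum_{\gamma \in \Gamma_n(K_n)} a_2(f(\gamma)) = L_n(f) + r_n,$$
where $L_n(f)$ is an integer linear combination of squared linking numbers over disjoint cycle pairs in $f(K_n)$ (and possibly $a_2$-values of non-Hamiltonian cycles in $f(K_n)$) whose coefficients are all divisible by $(n-5)!$. Once such a lift is in hand, as the paper states next in Theorem \ref{gcgthm}, reducing it modulo $(n-5)!$ yields (\ref{mncongru}) immediately.

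To produce the lift I would proceed by induction on $n$. The base case $n = 7$ is the integral Conway--Gordon identity from \cite{Nikkuni09}, whose mod $2$ reduction recovers Theorem \ref{CG1}. For the inductive step, I would expand the Hamiltonian $a_2$-sum over $K_n$ by averaging across all $n$ vertex-deletions: for each vertex $v$, every Hamiltonian cycle of $K_n$ passing through $v$ with neighbours $a$ and $b$ can be ``smoothed'' into a Hamiltonian cycle of $K_n \setminus v \cong K_{n-1}$ by replacing the length-two path through $v$ with the edge $\overline{ab}$. The Conway skein relation relates the two $a_2$ values by a linking-number term involving the disjoint cycles created by the smoothing, and summing over all $n$ vertices yields an identity expressing the $K_n$-sum as $n$ copies of $K_{n-1}$-sums plus a correction built from squared linking numbers in $f(K_n)$.

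The main obstacle will be the divisibility bookkeeping. One must verify that, after applying the inductive hypothesis to each $K_{n-1}$-term, the combined coefficients of all squared-linking-number contributions are divisible by $(n-5)!$ rather than merely by $(n-6)!$. Combinatorially this reduces to showing that for each pair $\lambda \in \Gamma_{p,q}(K_n)$ the number of Hamiltonian cycles of $K_n$ producing $\lambda$ via the above smoothing procedure is divisible by $(n-5)!$, reflecting the $(n-5)!$ ways to arrange the remaining ``free'' vertices along a completion of $\lambda$ to a Hamiltonian cycle. Finally, identifying the constant term with $r_n$, and in particular verifying the $\bmod\, 8$ dichotomy, requires evaluating the identity on one explicit model embedding — for example, a straight-line realization of $K_n$ on the moment curve in $\mathbb{R}^3$ — and carrying out a direct crossing-number and Arf-invariant computation, where the $8$-periodicity arises from the residue of $\binom{n-1}{2}$ modulo $4$.
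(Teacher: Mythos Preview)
Your overall strategy---deduce the congruence from the integral lift of Theorem~\ref{gcgthm} and then pin down the constant on a concrete embedding---is exactly the route taken in \cite{MN19}. However, the form you posit for the lift is not what Theorem~\ref{gcgthm} actually gives, and this creates a real gap.

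You claim the lift reads $\sum_{\gamma\in\Gamma_n}a_2(f(\gamma))=L_n(f)+r_n$ with every coefficient in $L_n(f)$ divisible by $(n-5)!$. But in Theorem~\ref{gcgthm} the squared linking numbers carry the coefficient $(n-5)!/2$, not $(n-5)!$. Reducing modulo $(n-5)!$ therefore does \emph{not} kill the $f$-dependent term $\tfrac{(n-5)!}{2}\sum_{\lambda\in\Gamma_{3,3}}\mathrm{lk}(f(\lambda))^2$ automatically; one must first show that the parity of $\sum_{\lambda\in\Gamma_{3,3}(K_n)}\mathrm{lk}(f(\lambda))^2$ is independent of $f$. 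This is where the Conway--Gordon--Sachs $K_6$ theorem enters: since $a^2\equiv a\pmod 2$ and each $K_6$-subgraph contributes an odd linking sum, one gets $\sum_{\lambda}\mathrm{lk}(f(\lambda))^2\equiv\binom{n}{6}\pmod 2$ for every $f$. Your proposal never invokes this step, and the inductive divisibility bookkeeping you sketch cannot manufacture the missing factor of $2$, because the skein correction genuinely produces half-integer multiples of $(n-5)!$ in front of $\mathrm{lk}^2$.

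A second, smaller point: the $8$-periodicity does not come from $\binom{n-1}{2}\pmod 4$. Combining the parity computation above with the constant $-\tfrac{(n-5)!}{2}\binom{n-1}{5}$ in Theorem~\ref{gcgthm} (or, equivalently, evaluating on the moment-curve embedding as in Example~\ref{srex}) shows that $r_n=\tfrac{(n-5)!}{2}\bigl(\binom{n-1}{6}\bmod 2\bigr)$; Lucas' theorem then gives $\binom{n-1}{6}$ odd precisely when $n\equiv 0,7\pmod 8$. The quantity $\binom{n-1}{2}\pmod 4$ has period $8$ but does not separate these residue classes.
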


Rephrasing Theorem \ref{maincor0}, we learn that, if there exists a spatial embedding of $K_{n}\ (n\ge 7)$ such that $\sum_{\gamma\in \Gamma_{n}(K_{n})}a_{2}(f(\gamma)) = m$ then $m$ must be congruent to $r_{n}$ modulo $(n-5)!$. Our purpose in this paper is to show that the converse is also true. This means that the congruence (\ref{mncongru}) cannot be refined any further for every $n\ge 7$.


\begin{Theorem}\label{mainthm} 
Let $n\ge 7$ be an integer.  For an integer $m$, there exists a spatial embedding $f$ of $K_{n}$ such that $\sum_{\gamma\in \Gamma_{n}(K_{n})}a_{2}(f(\gamma)) = m$ if and only if $m\equiv r_{n}\pmod{(n-5)!}$. 
\end{Theorem}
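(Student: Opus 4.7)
The ``only if'' direction is immediate from Theorem~\ref{maincor0}. For the ``if'' direction, one needs to construct, for every $n \ge 7$ and every integer $q$, a spatial embedding $f_{q}$ of $K_{n}$ satisfying $\sum_{\gamma \in \Gamma_{n}(K_{n})} a_{2}(f_{q}(\gamma)) = r_{n} + q(n-5)!$.

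My plan is a two-step construction. First, I would exhibit a base embedding $f_{0}$ of $K_{n}$ with $\sum_{\gamma} a_{2}(f_{0}(\gamma)) = r_{n}$: for $n=7$ this is the embedding in Conway--Gordon's original argument realizing $\sum a_{2} = 1$, and for $n \ge 8$ I would construct $f_{0}$ either as a canonical ``book'' or near-planar embedding whose sum is directly computable from its small collection of nontrivial Hamiltonian sub-knots, or inductively by adding one vertex at a time to a base case. Second, I would describe a local modification $M$ on spatial embeddings of $K_{n}$ that shifts $\sum_{\gamma} a_{2}(f(\gamma))$ by exactly $\pm(n-5)!$; iterating $M$ or its inverse from $f_{0}$ then realizes every target $r_{n} + q(n-5)!$.

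The key to the modification step is the integral lift of~(\ref{mncongru}) provided by Theorem~\ref{gcgthm}, which writes
\[
\sum_{\gamma \in \Gamma_{n}(K_{n})} a_{2}(f(\gamma)) = r_{n} + (n-5)! \cdot \Omega(f),
\]
where $\Omega(f)$ is an integer-valued invariant built from elementary invariants of $f$ (typically linking numbers of pairs of disjoint cycles of prescribed sizes, possibly together with $a_{2}$ of shorter cycles). The realization problem is thereby reduced to showing that $\Omega$ attains every integer value. I would single out one elementary invariant appearing in $\Omega$ with coefficient $\pm 1$ --- a natural candidate being the linking number $\mathrm{lk}(f(\sigma \cup \tau))$ of a pair $(\sigma,\tau) \in \Gamma_{3,n-3}(K_{n})$ --- and realize $M$ as a local crossing change in a small $3$-ball between an edge of $\sigma$ and an edge of $\tau$, tuned so that the targeted invariant shifts by $\pm 1$.

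The main obstacle is the isolation of the targeted invariant: a generic crossing change simultaneously alters the linking numbers of all disjoint pairs of cycles using the two crossing edges, and the $a_{2}$ of every cycle containing them both. Verifying that, when these many simultaneous shifts are assembled via the formula for $\Omega$, the net effect on $\Omega$ is exactly $\pm 1$ requires delicate bookkeeping that exploits the symmetry of $K_{n}$ and the explicit combinatorial form of the integral lift. This cancellation argument --- isolating a single ``elementary'' change on which the rest of the structure conspires to contribute nothing --- is where I expect the technical heart of the proof to lie.
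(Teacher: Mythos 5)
Your ``only if'' direction and the overall skeleton (base embedding plus modifications realizing all residues) match the paper, but the step you yourself flag as the technical heart --- a single local modification $M$ shifting $\sum_{\gamma}a_{2}(f(\gamma))$ by exactly $\pm(n-5)!$ --- is not merely delicate bookkeeping: as formulated it runs into a genuine obstruction, and the paper does something structurally different. First, the integral lift (Theorem~\ref{gcgthm}) involves $\sum_{\Gamma_{3,3}}{\rm lk}^{2}$, not linking numbers linearly (and no $\Gamma_{3,n-3}$ terms at all), so a crossing change between two edges shifts the total by an amount depending quadratically on the current linking numbers of all affected triangle pairs; iterating the same move does not produce constant increments of $\pm(n-5)!$. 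Second, if instead you try to shift $a_{2}$ of exactly the $(n-5)!$ Hamiltonian cycles through a fixed $4$-edge path by a clasper-type insertion that is trivial on every proper subcollection of those edges, such a move is a $C_{3}$-move and preserves $a_{2}$ altogether; the only clean insertion available is the delta move (band sum of Borromean rings) on a $3$-edge path, and that shifts the total by $\pm(n-4)!$, not $\pm(n-5)!$ (this is exactly the paper's Lemma~\ref{a2delta}).

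The paper resolves this with a two-scale argument rather than a single elementary move. Writing $m=(n-4)!q''+(n-5)!s+c_{n}$ with $s\in\{0,\dots,n-5\}$ (where $c_{n}$ is the explicitly computed sum for the standard rectilinear embedding), it constructs for each $s$ a \emph{global} modification of that embedding --- inserting $2s$ twists between two carefully chosen edges $\overline{1\ n-k}$ and $\overline{l+2\ \ n-(k+1)}$ --- and computes its effect via Theorem~\ref{gcgthm} by enumerating all affected $K_{5}$- and $K_{6}$-subgraphs (Lemma~\ref{keylemma}). For suitable parameters ($k=1,l=0$ for odd $n$; $k=l=(n-6)/2$ for even $n$) the shift is $\equiv(n-5)!s\pmod{(n-4)!}$ (Lemma~\ref{keylemma2}); the leftover multiple of $(n-4)!$ is then corrected exactly by iterated delta moves. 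So the part of your plan you hoped to settle by a cancellation argument is precisely where a new construction, not an isolation argument, is required.
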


For a spatial embedding $f$ of $G$ and a subset $\Gamma$ of $\Gamma(G)$, the modulo $d$ reduction of $\sum_{\gamma\in \Gamma}a_{2}(f(\gamma))$ was denoted by $\mu_{f}(G, \Gamma;d)$ in Shimabara \cite{shimabara88} and the condition for it not to depend on $f$ was investigated.\footnote{Later, Lemma 2 in \cite{shimabara88} was found to be incorrect and was modified in \cite[Lemma 1]{F08}.} Theorem \ref{mainthm} completely determines the maximum value of $d$ such that $\mu_{f}(K_{n}, \Gamma_{n}(K_{n});d)$ does not depend on $f$, and also determines the possible values of $\sum_{\gamma\in \Gamma_{n}(K_{n})}a_{2}(f(\gamma))$ as that time. We prove Theorem \ref{mainthm} in the next section. Our proof is constructive, namely for any integer $q$, we construct a spatial embedding $f$ of $K_{n}$ satisfying $\sum_{\gamma\in \Gamma_{n}(K_{n})}a_{2}(f(\gamma)) = (n-5)!q +r_{n}$.

\begin{Remark}\label{sko}
\begin{enumerate}
\item If we define $r_{6}=0$, then we also will see that Theorem \ref{mainthm} is true for $n=6$, namely every integer can be realized by $\sum_{\gamma\in \Gamma_{6}(K_{6})}a_{2}(f(\gamma))$. 
\item It is also known that for every spatial embedding $f$ of $K_{6}$, the summation $\sum_{\lambda\in \Gamma_{3,3}(K_{6})}{\rm lk}(f(\lambda))$ is odd, where ${\rm lk}$ denotes the {\it linking number} in ${\mathbb R}^{3}$ (Conway--Gordon \cite{CG83}, Sachs \cite{S84}). We refer the reader to Karasev--Skopenkov \cite{KS} for a converse of this congruence and its higher-dimensional analogue. 
\end{enumerate}
\end{Remark}

\section{Proof of Theorem \ref{mainthm}}\label{proof}

First of all, we prepare a kind of `canonical' spatial embedding of $K_{n}$. Let $h$ be a spatial embedding of $K_{n}$ constructed by taking all vertices $1,2,\ldots,n$ of $K_{n}$ in order on the moment curve $(t,t^{2},t^{3})\ (t\ge 0)$ in ${\mathbb R}^{3}$ and connecting every pair of two distinct vertices by a straight line segment, see Fig. \ref{K78rect} for $n=7,8$. In \cite{MN19}, this is called a {\it standard rectilinear spatial embedding} of $K_{n}$, and $h(K_{n})$ is also ambient isotopic to a {\it canonical book presentation} of $K_{n}$ introduced in \cite{EO94}. We denote $\sum_{\gamma\in \Gamma_{n}(K_{n})}a_{2}(h(\gamma))$ by $c_{n}$. Let $V=\{i_{1},i_{2},\ldots,i_{q}\}$ be the set of $q$ vertices of $K_{n}$ satisfying $i_{1}<i_{2}< \cdots < i_{q}$. Then $V$ induces a subgraph of $K_{n}$ isomorphic to $K_{q}$. We denote this subgraph by $K_{q}[V]=K_{q}[i_{1},i_{2},\ldots,i_{q}]$. Then for any $V$, the spatial subgraph $h(K_{q}[V])$ of $h(K_{n})$ is ambient isotopic to $h(K_{q})$.

\begin{figure}[htbp]
\begin{center}
\scalebox{0.55}{\includegraphics*{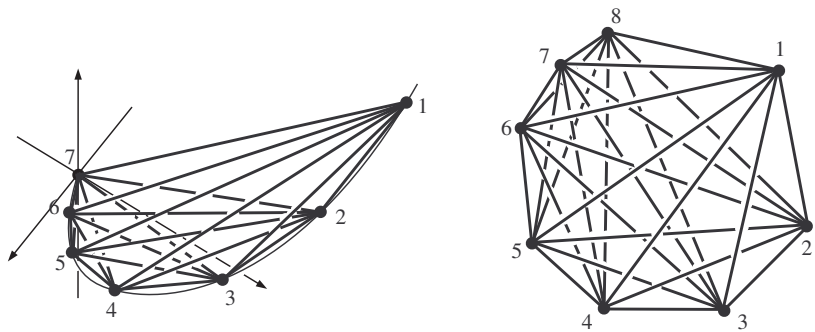}}
\caption{Spatial graph $h(K_{n})\ (n=7,8)$ }
\label{K78rect}
\end{center}
\end{figure}

To calculate the summation of $a_{2}$ over the Hamiltonian knots in a spatial complete graph, let us recall a generalized Conway--Gordon type theorem.

\begin{Theorem}\label{gcgthm} {\rm (Morishita--Nikkuni \cite{MN19})} 
Let $n\ge 6$ be an integer. For any spatial embedding $f$ of $K_{n}$, we have 
\begin{eqnarray*}\label{maintheorem}
&&\sum_{\gamma\in \Gamma_{n}(K_{n})}a_{2}(f(\gamma))
- (n-5)!\sum_{\gamma\in \Gamma_{5}(K_{n})}a_{2}(f(\gamma))\\
&=& 
\frac{(n-5)!}{2} 
\bigg(
\sum_{\lambda\in \Gamma_{3,3}(K_{n})}{{\rm lk}(f(\lambda))}^{2}
- \binom{n-1}{5}
\bigg).\nonumber 
\end{eqnarray*}
\end{Theorem}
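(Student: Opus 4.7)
My plan is to reduce the identity to the vanishing of a single quantity on spatial embeddings. Set
\[
\Phi(f) := \sum_{\gamma \in \Gamma_{n}(K_{n})} a_{2}(f(\gamma)) - (n-5)! \sum_{\gamma \in \Gamma_{5}(K_{n})} a_{2}(f(\gamma)) - \frac{(n-5)!}{2} \sum_{\lambda \in \Gamma_{3,3}(K_{n})} \mathrm{lk}(f(\lambda))^{2} + \frac{(n-5)!}{2} \binom{n-1}{5}.
\]
Then the theorem is equivalent to $\Phi(f) = 0$ for every spatial embedding $f$ of $K_{n}$. I would prove this in two stages: (i) show that $\Phi$ is invariant under crossing changes, so $\Phi(f)$ is constant on spatial embeddings of $K_n$; and (ii) verify $\Phi(h) = 0$ on the canonical rectilinear embedding $h$ of Figure~\ref{K78rect}.

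For step (i), I would use the fact that any two spatial embeddings of $K_{n}$ are connected by a finite sequence of crossing changes between pairs of disjoint edges, and check invariance under a single such move at a crossing between disjoint edges $e = \overline{ab}$ and $e' = \overline{cd}$. The change in each cycle's $a_{2}$ is governed by $a_{2}(K_{+}) - a_{2}(K_{-}) = \mathrm{lk}(L_{0})$, applied to every Hamiltonian (resp.\ $5$-)cycle containing both $e$ and $e'$, while the change in a squared linking number decomposes as $\Delta(\mathrm{lk}(\lambda)^2) = 2\,\mathrm{lk}(\lambda)\cdot \Delta \mathrm{lk}(\lambda) + (\Delta \mathrm{lk}(\lambda))^2$, with $\Delta\mathrm{lk}(\lambda) = \pm 1$ exactly when the two cycles of $\lambda$ separately contain $e$ and $e'$. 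Combining everything, $\Delta \Phi$ is a linear combination of linking numbers of $2$-component sub-links of $f(K_{n})$ weighted by combinatorial multiplicities (the number of Hamiltonian cycles, $5$-cycles, or $\Gamma_{3,3}$-pairs containing $e$ and $e'$ in a prescribed way). The heart of the argument is showing that these multiplicities, weighted by $1$, $-(n-5)!$, and $-(n-5)!/2$, cancel identically; the factor $(n-5)!$ should emerge naturally as the number of orderings of the remaining $n-5$ vertices on a Hamiltonian cycle extending a fixed $5$-cycle through $e$ and $e'$.

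For step (ii), since $h(K_{n})$ is ambient isotopic to the canonical book presentation, every cycle of $h(K_{n})$ bounds a trivial knot, so $a_{2}(h(\gamma)) = 0$ for every $\gamma \in \Gamma_{n}(K_{n}) \cup \Gamma_{5}(K_{n})$. Hence $\Phi(h) = 0$ reduces to the combinatorial identity
\[
\sum_{\lambda \in \Gamma_{3,3}(K_{n})} \mathrm{lk}(h(\lambda))^{2} = \binom{n-1}{5}.
\]
In the canonical book presentation one has $\mathrm{lk}(h(\lambda)) \in \{-1,0,1\}$, and $\mathrm{lk}(h(\lambda)) \neq 0$ holds exactly when the six vertices of $\lambda$ alternate along the spine in a specific interleaved pattern relative to the two $3$-cycles. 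Counting such interleaved $6$-subsets of $\{1,\ldots,n\}$ produces $\binom{n-1}{5}$; this reduces, for instance by induction on $n$, to the base case $n=6$ with $\binom{5}{5}=1$, consistent with the classical Sachs/Conway--Gordon result.

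The main obstacle will be step (i): the smoothing of a Hamiltonian knot at the $e\cap e'$ crossing gives a $2$-component link whose components are generally \emph{not} cycles of $K_{n}$, because they include the smoothing arc. Re-expressing these linking numbers as signed sums of linking numbers of genuine $\Gamma_{3,3}$-pairs of $K_{n}$, and then verifying that the total triple sum cancels against the $\Gamma_{5}$ and $\Gamma_{3,3}$ contributions, is the technically delicate part; I would organize the counting by partitioning Hamiltonian cycles through $e$ and $e'$ according to how the four endpoints $a, b, c, d$ separate the remaining $n-4$ vertices into the two arcs between them, so that each case contributes a transparent multiple of $(n-5)!$, and I expect this bookkeeping to occupy the bulk of the proof.
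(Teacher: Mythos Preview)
This theorem is not proved in the present paper; it is quoted from \cite{MN19} as a tool (see the sentence preceding Theorem~\ref{gcgthm}), so there is no in-paper argument to compare against. Your two-step strategy---show that $\Phi$ is invariant under crossing changes, then evaluate on a standard embedding---is the standard one and is essentially how the result is established in \cite{Nikkuni09} and \cite{MN19}.

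However, your step~(ii) contains a concrete factual error. It is \emph{not} true that every cycle of the canonical book presentation $h(K_{n})$ is a trivial knot: for $n\ge 7$ the spatial graph $h(K_{n})$ already contains knotted Hamiltonian cycles (for instance $h(K_{7})$ contains a trefoil), and Example~\ref{srex} in this very paper computes
\[
\sum_{\gamma\in\Gamma_{n}(K_{n})}a_{2}(h(\gamma))
=\frac{(n-5)!}{2}\Bigl(\binom{n}{6}-\binom{n-1}{5}\Bigr),
\]
which is nonzero for $n\ge 7$. Correspondingly, the triangle--triangle sum is
\[
\sum_{\lambda\in\Gamma_{3,3}(K_{n})}\mathrm{lk}(h(\lambda))^{2}=\binom{n}{6},
\]
since each $6$-vertex subset of $h(K_{n})$ contributes exactly one Hopf link; it is \emph{not} $\binom{n-1}{5}$. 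With these corrected values one still obtains $\Phi(h)=0$, so the step is salvageable, but your stated justification (``every cycle bounds a trivial knot'') and your ``interleaved $6$-subsets give $\binom{n-1}{5}$'' count are both wrong and would need to be replaced by the computation in Example~\ref{srex}. Your step~(i) sketch is on the right track, and the difficulty you flag about smoothings not being subgraph cycles is genuine; resolving it is where the real work lies.
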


Therefore, to know the summation of $a_{2}$ over the Hamiltonian knots in a spatial complete graph, we only need to know the summation of $a_{2}$ over the `pentagon' knots and the summation of ${\rm lk}^{2}$ over the `triangle-triangle' links.

\begin{Example}\label{srex}{\rm (\cite{MN19})} 
For an integer $n\ge 6$, let us calculate $c_{n} = \sum_{\gamma\in \Gamma_{n}(K_{n})}a_{2}(h(\gamma))$. As we said before, for any collection of $q$ vertices $i_{1}< i_{2}< \cdots < i_{q}$ of $K_{n}$, the spatial subgraph $h(K_{q}[i_{1},i_{2},\ldots,i_{q}])$ of $h(K_{n})$ is ambient isotopic to $h(K_{q})$. 
Note that $h(K_{5})$ contains no nontrivial knots, and $h(K_{6})$ contains exactly one nonsplittable link which is a Hopf link, see Fig. \ref{srK56}. Then we have 
\begin{eqnarray*}
\sum_{\lambda\in \Gamma_{3,3}(K_{n})}{\rm lk}(h(\lambda))^{2}
&=& \sum_{i_{1}<i_{2}< \cdots <i_{6}}\bigg(
\sum_{\lambda\in \Gamma_{3,3}(K_{6}[i_{1},i_{2},\ldots,i_{6}])}{\rm lk}(h(\lambda))^{2}
\bigg) = \binom{n}{6},\\
\sum_{\gamma\in \Gamma_{5}(K_{n})}a_{2}(h(\gamma))
&=& \sum_{i_{1}<i_{2}< \cdots <i_{5}}\bigg(
\sum_{\gamma\in \Gamma_{5}(K_{5}[i_{1},i_{2},\ldots,i_{5}])}a_{2}(h(\gamma))
\bigg) = 0. 
\end{eqnarray*}

Thus by Theorem \ref{gcgthm}, we have 
\begin{eqnarray*}
c_{n} = \frac{(n-5)!}{2} 
\bigg(
\binom{n}{6}
- \binom{n-1}{5}
\bigg). 
\end{eqnarray*}
\end{Example}

\begin{figure}[htbp]
\begin{center}
\scalebox{0.56}{\includegraphics*{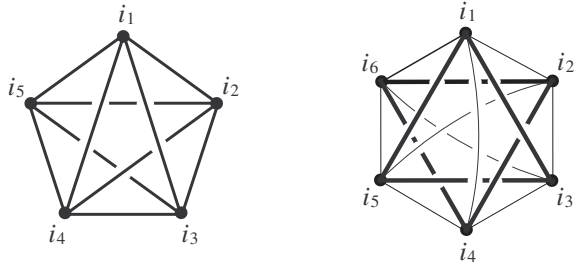}}
\caption{$h(K_{q}[i_{1},i_{2},\ldots,i_{q}])\ (q=5,6)$}
\label{srK56}
\end{center}
\end{figure}

Let $n\ge 6$ be an integer and $k,l$ non-negative integers satisfying $k+l\le n-4$. Then for a non-negative integer $s$, we obtain a spatial graph $f_{k,l}^{(s)}(K_{n})$ from $h(K_{n})$ by twisting two spatial edges $\overline{1\ n-k}$ and $\overline{l+2\ \ n-(k+1)}$ $2s$ times as illustrated in Fig. \ref{fmskemb} and Fig. \ref{fulltwistsrep}. Note that this twist is done close enough to the vertices $n-k$ and $n-(k+1)$, and this twisted part represented by a gray segment with \fbox{ $2s$ } in the diagram as in  Fig. \ref{fmskemb} has no under crossings with any other edge.

\begin{figure}[htbp]
\begin{center}
\scalebox{0.525}{\includegraphics*{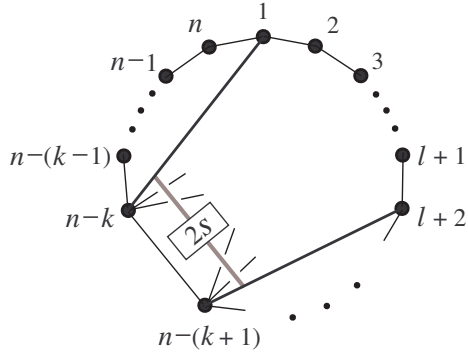}}
\caption{$f_{k,l}^{(s)}(K_{n})\ (s,k,l\ge 0,\ k+l\le n-4)$}
\label{fmskemb}
\end{center}
\end{figure}

\begin{figure}[htbp]
\begin{center}
\scalebox{0.55}{\includegraphics*{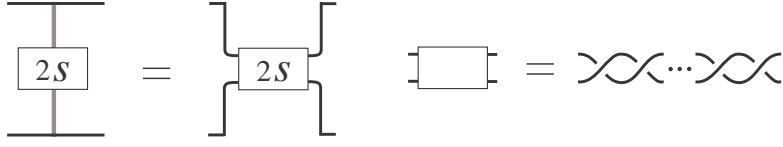}}
\caption{$2s$ twists ($s$ full twists) formed by two spatial edges}
\label{fulltwistsrep}
\end{center}
\end{figure}

\begin{Lemma}\label{keylemma} 
\begin{eqnarray*}
&& \sum_{\gamma\in \Gamma_{n}(K_{n})}a_{2}\big(f^{(s)}_{k,l}(\gamma)\big)
- c_{n} 
= (n-4)! \sigma(k,l;s) + (n-5)! \tau(k,l;s), 
\end{eqnarray*}
where 
\begin{eqnarray*}
\sigma(k,l;s) &=& s(s(k+l+1)-k),\\
\tau(k,l;s) &=& s\bigg(
(1-s)(k^{2}+kl+l^{2}) \\
&& + s \dbinom{k}{2} + s \dbinom{n-(k+l+4)}{2}
+ (s - 2) \dbinom{l}{2}
\bigg). 
\end{eqnarray*}
\end{Lemma}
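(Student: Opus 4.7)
The plan is to apply Theorem~\ref{gcgthm} separately to $f=f^{(s)}_{k,l}$ and $h$ and subtract. Using $\sum_{\gamma\in\Gamma_5(K_n)}a_2(h(\gamma))=0$ and $\sum_{\lambda\in\Gamma_{3,3}(K_n)}{\rm lk}(h(\lambda))^2=\binom{n}{6}$ from Example~\ref{srex}, the left-hand side of the lemma rewrites as
\begin{align*}
(n-5)!\sum_{\gamma\in\Gamma_5(K_n)}a_2\bigl(f(\gamma)\bigr)
+\frac{(n-5)!}{2}\biggl(\sum_{\lambda\in\Gamma_{3,3}(K_n)}{\rm lk}\bigl(f(\lambda)\bigr)^2-\binom{n}{6}\biggr),
\end{align*}
so the task reduces to computing the change under the twist of the pentagon $a_2$ sum and of the triangle-triangle ${\rm lk}^2$ sum.

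Because the twist is localized in a small ball meeting only the two edges $e_1=\overline{1\ n-k}$ and $e_2=\overline{l+2\ n-(k+1)}$ (and no other spatial edges pass over the twisted region), any cycle or pair of cycles using at most one of these edges is unaffected: a single arc in the ball may be ambient-isotoped within it to remove the twist without disturbing the rest. Hence only two kinds of configurations contribute: (i) $5$-cycles $\gamma$ containing both $e_1$ and $e_2$, and (ii) ordered pairs $\lambda=(\alpha,\beta)\in\Gamma_{3,3}(K_n)$ with $e_1\in\alpha$ and $e_2\in\beta$. The hypothesis $k+l\le n-4$ guarantees $1<l+2<n-(k+1)<n-k$, and a straightforward count yields $4(n-4)$ affected $5$-cycles (four cyclic orderings on $\{1,l+2,n-(k+1),n-k,v\}$ for each of the $n-4$ choices of a fifth vertex $v$) and $(n-4)(n-5)$ affected triangle-triangle pairs, parametrized by $\alpha=[1,n-k,u]$, $\beta=[l+2,n-(k+1),v]$ with $u\neq v$ in the complementary vertex set.

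For the linking sum, $s$ full twists shift each affected linking number by $\pm s$, so
$$
{\rm lk}(f(\lambda))^2-{\rm lk}(h(\lambda))^2 = \pm 2s\,{\rm lk}(h(\lambda)) + s^2,
$$
where the linking numbers ${\rm lk}(h(\lambda))$ in the canonical book presentation are read off via the classical interleaving criterion (${\rm lk}=\pm 1$ exactly when the two sorted triples interleave, $0$ otherwise). Partitioning the positions of $u$ and $v$ into the three intervals $\{2,\ldots,l+1\}$, $\{l+3,\ldots,n-k-2\}$, $\{n-k+1,\ldots,n\}$ of sizes $l$, $n-k-l-4$, $k$, the nine resulting cases produce the binomials $\binom{l}{2}$, $\binom{n-k-l-4}{2}$, $\binom{k}{2}$ from same-interval pairs and the expression $k^2+kl+l^2$ from cross-interval pairs, matching the coefficients of $\tau(k,l;s)$. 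For the pentagon sum, each of the four cycle-types on the five vertices $\{1,l+2,n-(k+1),n-k,v\}$ places the two edges at the twist region either as parallel or as antiparallel strands; since the underlying pentagon is unknotted in $h$, the twisted pentagon is a rational knot whose $a_2$ is a quadratic polynomial in $s$ computable by the Conway skein relation. Grouping by the interval containing $v$ and collecting produces a contribution $(n-4)\,\sigma(k,l;s)$, which absorbs into $(n-4)!\,\sigma(k,l;s)=(n-4)(n-5)!\,\sigma(k,l;s)$.

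The main technical obstacle is sign-tracking: for each of the nine $(u,v)$-configurations in (ii) and each of the four pentagon types in (i), one must fix a consistent orientation on the cycles, determine whether the twist adds or subtracts from the relevant invariant, and verify that the assembled contributions sum to $(n-4)!\,\sigma(k,l;s)+(n-5)!\,\tau(k,l;s)$. Once the signs are settled, the remaining combinatorial identities (in particular the interplay between $k^2+kl+l^2$ and $2\binom{l}{2}$ producing the $k^2+kl+l$ linear coefficient of $\tau/s$) are routine.
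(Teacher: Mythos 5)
Your overall strategy coincides with the paper's: apply Theorem~\ref{gcgthm} to both $f^{(s)}_{k,l}$ and $h$ and subtract, observe that only configurations containing both twisted edges are affected, classify those configurations by the positions of the extra vertices relative to $1$, $l+2$, $n-(k+1)$, $n-k$, record that the twist shifts each relevant linking number by $\pm s$ and turns the relevant pentagons into $(2,2s\pm1)$-type torus knots, and then assemble. Your counts of affected objects are also consistent with the paper's ($(n-4)(n-5)$ triangle--triangle pairs; the $4(n-4)$ five-cycles you list are the candidates, of which only two per choice of fifth vertex become nontrivial).

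However, there is a genuine gap exactly where the content of the lemma lies, and the one structural claim you do commit to is false. You assert that the pentagon sum contributes $(n-4)!\,\sigma(k,l;s)$ and (implicitly) that the linking sum contributes $(n-5)!\,\tau(k,l;s)$. In fact the two sums mix: the pentagon computation yields $(n-4)!\,s^{2}+(n-5)!\,sl$ (not a multiple of $\sigma$ unless $k=0$ and $l=0$), while the linking computation yields $(n-4)!\,s\bigl((s-1)k+sl\bigr)-(n-5)!\,sl+(n-5)!\,\tau(k,l;s)$; the $(n-5)!\,sl$ terms cancel and the $(n-4)!$-multiples combine to give $(n-4)!\,\sigma(k,l;s)$. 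Similarly, the term $k^{2}+kl+l^{2}$ in $\tau$ does not come directly from cross-interval pairs (those give $kc+kl+lc$ with $c=n-k-l-4$); it only appears after substituting $c=(n-4)-k-l$ and splitting off the $(n-4)!$ part. Since the lemma is precisely the exact identity for $\sigma$ and $\tau$, deferring the sign-tracking and the final algebra as ``routine'' while asserting an incorrect attribution means the proof as written does not establish the statement; you would need to carry out the case-by-case evaluation of the shifted linking numbers (including the $(s-1)^{2}$ contributions from the pairs that are Hopf links in $h$) and the torus-knot $a_{2}$ values, and then perform the rearrangement that isolates the $(n-4)!$ and $(n-5)!$ parts.
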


\begin{proof}
By Theorem \ref{gcgthm}, we have 
\begin{eqnarray}\label{difference}
&& \sum_{\gamma\in \Gamma_{n}(K_{n})}a_{2}\big(f^{(s)}_{k,l}(\gamma)\big)
- c_{n} \\
&=& (n-5)!\bigg(\sum_{\gamma\in \Gamma_{5}(K_{n})}a_{2}\big(f^{(s)}_{k,l}(\gamma)\big)
- \sum_{\gamma\in \Gamma_{5}(K_{n})}a_{2}(h(\gamma))
\bigg)\nonumber\\
&& + \frac{(n-5)!}{2}\bigg(
\sum_{\lambda\in \Gamma_{3,3}(K_{n})}{\rm lk}\big(f_{k,l}^{(s)}(\lambda)\big)^{2}
- \sum_{\lambda\in \Gamma_{3,3}(K_{n})}{\rm lk}(h(\lambda))^{2}
\bigg). \nonumber
\end{eqnarray}
In addition, in the same way as Example \ref{srex}, we have 
\begin{eqnarray}\label{diff1}
&& \sum_{\lambda\in \Gamma_{3,3}(K_{n})}{\rm lk}\big(f_{k,l}^{(s)}(\lambda)\big)^{2}
- \sum_{\lambda\in \Gamma_{3,3}(K_{n})}{\rm lk}(h(\lambda))^{2} \\
&=& \sum_{i_{1}<i_{2}<\cdots < i_{6}}\bigg(
\sum_{\lambda\in \Gamma_{3,3}(K_{6}[i_{1},i_{2},\ldots,i_{6}])}{\rm lk}\big(f_{k,l}^{(s)}(\lambda)\big)^{2}
- \sum_{\lambda\in \Gamma_{3,3}(K_{6}[i_{1},i_{2},\ldots,i_{6}])}{\rm lk}(h(\lambda))^{2}
\bigg),\nonumber 
\end{eqnarray}
\begin{eqnarray}\label{diff2}
&& \sum_{\gamma\in \Gamma_{5}(K_{n})}a_{2}\big(f_{k,l}^{(s)}(\gamma)\big)
- \sum_{\gamma\in \Gamma_{5}(K_{n})}a_{2}(h(\gamma)) \\
&=& \sum_{i_{1}< i_{2}< \cdots <i_{5}}\bigg(
\sum_{\gamma\in \Gamma_{5}(K_{5}[i_{1},i_{2},\ldots,i_{5}])}a_{2}\big(f_{k,l}^{(s)}(\gamma)\big)
- \sum_{\gamma\in \Gamma_{5}(K_{5}[i_{1},i_{2},\ldots,i_{5}])}a_{2}(h(\gamma))
\bigg). \nonumber 
\end{eqnarray}
Here, we only need to consider $K_{q}[i_{1},i_{2},\ldots,i_{q}]\ (q=5,6)$ containing two spatial edges $\overline{1\ n-k}$ and $\overline{l+2\ n-(k+1)}$. First we consider $K_{6}[i_{1},i_{2},\ldots,i_{6}]$ containing $\overline{1\ n-k}$ and $\overline{l+2\ n-(k+1)}$. There are six types of such graphs: 
\begin{enumerate}
\item $A_{ij} = K_{6}[1,l+2,i,n-(k+1),n-k,j]$, 
\item $B_{ij} = K_{6}[1,i,l+2,n-(k+1),n-k,j]$, 
\item $C_{ij} = K_{6}[1,i,l+2,j,n-(k+1),n-k]$, 
\item $D_{ij} = K_{6}[1,l+2,i,j,n-(k+1),n-k]$, 
\item $E_{ij} = K_{6}[1,i,j,l+2,n-(k+1),n-k]$, 
\item $F_{ij} = K_{6}[1,l+2,n-(k+1),n-k,i,j]$. 
\end{enumerate} 

\noindent
Then $f_{k,l}^{(s)}(A_{ij})$, $f_{k,l}^{(s)}(B_{ij})$, $f_{k,l}^{(s)}(C_{ij})$, $f_{k,l}^{(s)}(D_{ij})$, $f_{k,l}^{(s)}(E_{ij})$ and $f_{k,l}^{(s)}(F_{ij})$ are spatial graphs as illustrated in Fig. \ref{k6subgraphs} (1), (2), (3), (4), (5) and (6), respectively.

\begin{figure}[htbp]
\begin{center}
\scalebox{0.56}{\includegraphics*{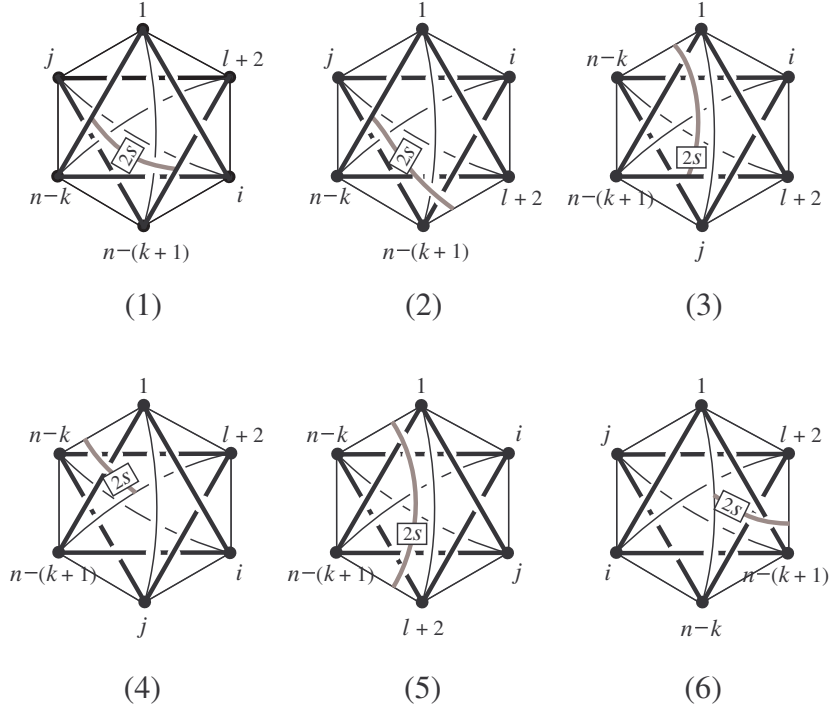}}
\caption{(1) $f_{k,l}^{(s)}(A_{ij})$ (2) $f_{k,l}^{(s)}(B_{ij})$ (3) $f_{k,l}^{(s)}(C_{ij})$ (4) $f_{k,l}^{(s)}(D_{ij})$ (5) $f_{k,l}^{(s)}(E_{ij})$ (6) $f_{k,l}^{(s)}(F_{ij})$}
\label{k6subgraphs}
\end{center}
\end{figure}

(1) We put $\lambda_{A} = [1\ n-k\ i]\cup [j\ l+2\ n-(k+1)]$ and $\mu_{A} = [1\ n-k\ j]\cup [l+2\ n-(k+1)\ i]$. Then we can see that ${\rm lk}\big(f_{k,l}^{(s)}(\lambda_{A})\big)^{2} = (s-1)^{2}$, ${\rm lk}\big(f_{k,l}^{(s)}(\mu_{A})\big)^{2} = s^{2}$, and $f_{k,l}^{(s)}(\lambda')$ is a trivial link for any $\lambda'\in \Gamma_{3,3}(A_{ij})\setminus \{\lambda_{A},\mu_{A}\}$. Note that there are $n-(k+l+4)$ ways to choose a vertex $i$, and there are $k$ ways to choose a vertex $j$. Then we have 
\begin{eqnarray}\label{diff3}
&& \sum_{i,j}\bigg(
\sum_{\lambda\in \Gamma_{3,3}(A_{ij})}{\rm lk}\big(f_{k,l}^{(s)}(\lambda)\big)^{2} - \sum_{\lambda\in \Gamma_{3,3}(A_{ij})}{\rm lk}(h(\lambda))^{2}
\bigg) \\
&=& k(n-(k+l+4))(s^{2}+(s-1)^{2} - 1) \nonumber\\
&=& 2s(s-1)k(n-(k+l+4)). \nonumber
\end{eqnarray}

(2) We put $\lambda_{B} = [1\ n-k\ l+2]\cup [i\ j\ n-(k+1)]$, $\mu_{B} = [1\ n-k\ j]\cup [l+2\ n-(k+1)\ i]$ and $\nu_{B} = [1\ n-k\ i]\cup [l+2\ n-(k+1)\ j]$. Then we can see that ${\rm lk}\big(f_{k,l}^{(s)}(\lambda_{B})\big)^{2} = 1$, ${\rm lk}\big(f_{k,l}^{(s)}(\mu_{B})\big)^{2} = {\rm lk}\big(f_{k,l}^{(s)}(\nu_{B})\big)^{2} = s^{2}$, and $f_{k,l}^{(s)}(\lambda')$ is a trivial link for any $\lambda'\in \Gamma_{3,3}(B_{ij})\setminus \{\lambda_{B},\mu_{B},\nu_{B}\}$. Note that there are $l$ ways to choose a vertex $i$, and there are $k$ ways to choose a vertex $j$. Then we have 
\begin{eqnarray}\label{diff4}
\sum_{i,j}\bigg(
\sum_{\lambda\in \Gamma_{3,3}(B_{ij})}{\rm lk}\big(f_{k,l}^{(s)}(\lambda)\big)^{2} - \sum_{\lambda\in \Gamma_{3,3}(B_{ij})}{\rm lk}(h(\lambda))^{2}
\bigg) = 2s^{2}kl.
\end{eqnarray}

(3) Note that there are $l$ ways to choose a vertex $i$, and there are $n-(k+l+4)$ ways to choose a vertex $j$. Then in the same way as (2), we have 
\begin{eqnarray}\label{diff5}
&& \sum_{i,j}\bigg(
\sum_{\lambda\in \Gamma_{3,3}(C_{ij})}{\rm lk}\big(f_{k,l}^{(s)}(\lambda)\big)^{2} - \sum_{\lambda\in \Gamma_{3,3}(C_{ij})}{\rm lk}(h(\lambda))^{2}
\bigg) \\
&=& 2s^{2}l(n-(k+l+4)). \nonumber
\end{eqnarray}

(4) Note that there are $\binom{n-(k+l+4)}{2}$ ways to choose the vertices $i$ and $j$. Then in the same way as (2), we have 
\begin{eqnarray}\label{diff6}
&& \sum_{i,j}\bigg(
\sum_{\lambda\in \Gamma_{3,3}(D_{ij})}{\rm lk}\big(f_{k,l}^{(s)}(\lambda)\big)^{2} - \sum_{\lambda\in \Gamma_{3,3}(D_{ij})}{\rm lk}(h(\lambda))^{2}
\bigg) = 2s^{2}\dbinom{n-(k+l+4)}{2}.
\end{eqnarray}

(5) Note that there are $\binom{l}{2}$ ways to choose the vertices $i$ and $j$. Then in the same way as (2), we have 
\begin{eqnarray}\label{diff7}
&& \sum_{i,j}\bigg(
\sum_{\lambda\in \Gamma_{3,3}(E_{ij})}{\rm lk}\big(f_{k,l}^{(s)}(\lambda)\big)^{2} - \sum_{\lambda\in \Gamma_{3,3}(E_{ij})}{\rm lk}(h(\lambda))^{2}
\bigg) = 2s^{2}\dbinom{l}{2}.
\end{eqnarray}

(6) Note that there are $\binom{k}{2}$ ways to choose the vertices $i$ and $j$. Then in the same way as (2), we have 
\begin{eqnarray}\label{diff8}
&& \sum_{i,j}\bigg(
\sum_{\lambda\in \Gamma_{3,3}(F_{ij})}{\rm lk}\big(f_{k,l}^{(s)}(\lambda)\big)^{2} - \sum_{\lambda\in \Gamma_{3,3}(F_{ij})}{\rm lk}(h(\lambda))^{2}
\bigg) = 2s^{2}\dbinom{k}{2}.
\end{eqnarray}
Then by combining (\ref{diff3}), (\ref{diff4}), (\ref{diff5}), (\ref{diff6}), (\ref{diff7}) and (\ref{diff8}) with (\ref{diff1}), we have 
\begin{eqnarray}\label{dlk}
&& \frac{(n-5)!}{2}\bigg(
\sum_{\lambda\in \Gamma_{3,3}(K_{n})}{\rm lk}\big(f_{k,l}^{(s)}(\lambda)\big)^{2}
- \sum_{\lambda\in \Gamma_{3,3}(K_{n})}{\rm lk}(h(\lambda))^{2} 
\bigg)\\
&=& (n-5)!s\bigg(
(s-1)k(n-(k+l+4)) + s\dbinom{k}{2} + skl \nonumber\\ 
&& + sl(n-(k+l+4)) + s\dbinom{n-(k+l+4)}{2} + s\dbinom{l}{2}
\bigg) \nonumber\\
&=& (n-5)!s\bigg(
((s-1)k+sl)(n-4) - ((s-1)k+sl)(k+l) + skl  \nonumber\\ 
&& + s\dbinom{k}{2} + s\dbinom{n-(k+l+4)}{2} + s\dbinom{l}{2}
\bigg) \nonumber\\
&=& (n-4)!s((s-1)k+sl)+ (n-5)!s\bigg(
 -l - (s-1)(k^{2}+kl+l^{2})  \nonumber\\ 
&& -l(l-1) + s\dbinom{k}{2} + s\dbinom{n-(k+l+4)}{2} + s\dbinom{l}{2}
\bigg) \nonumber\\
&=& (n-4)!s((s-1)k+sl) - (n-5)!sl + (n-5)!\tau(k,l;s).\nonumber
\end{eqnarray}

Next we consider $K_{5}[i_{1},i_{2},\ldots,i_{5}]$ containing $\overline{1\ n-k}$ and $\overline{l+2\ n-(k+1)}$. There are three types of such graphs: 
\begin{enumerate}
\item $G_{i} = K_{5}[1,l+2,n-(k+1),n-k,i]$, 
\item $H_{i} = K_{5}[1,i,l+2,n-(k+1),n-k]$, 
\item $I_{i} = K_{5}[1,l+2,i,n-(k+1),n-k]$. 
\end{enumerate}

\noindent
Then $f_{k,l}^{(s)}(G_{i})$, $f_{k,l}^{(s)}(H_{i})$ and $f_{k,l}^{(s)}(I_{i})$ are spatial graphs as illustrated in Fig. \ref{k5subgraphs} (1), (2) and (3), respectively.

\begin{figure}[htbp]
\begin{center}
\scalebox{0.56}{\includegraphics*{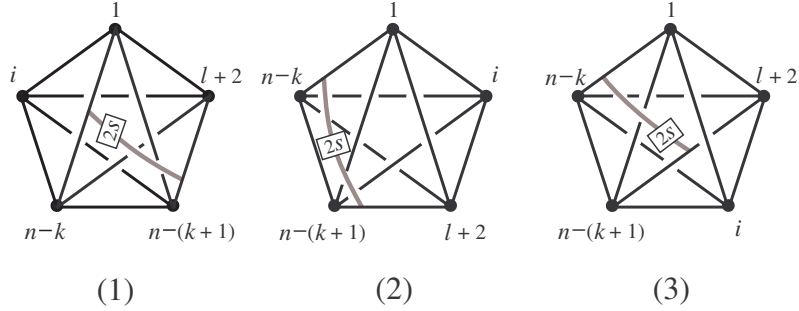}}
\caption{(1) $f_{k,l}^{(s)}(G_{i})$ (2) $f_{k,l}^{(s)}(H_{i})$ (3) $f_{k,l}^{(s)}(I_{i})$}
\label{k5subgraphs}
\end{center}
\end{figure}

(1) We put $\gamma_{G} = [1\ n-k\ i\ l+2\ n-(k+1)]$ and $\delta_{G} = [1\ n-k\ l+2\ n-(k+1)\ i]$. Then we can see that $f_{k,l}^{(s)}(\gamma_{G})$ is ambient isotopic to the $(2,2s+1)$-torus knot, $f_{k,l}^{(s)}(\delta_{G})$ is ambient isotopic to the $(2,2s-1)$-torus knot and $f_{k,l}^{(s)}(\gamma')$ is a trivial knot for any $\gamma'\in \Gamma_{5}(G_{i})\setminus \{\gamma_{G},\delta_{G}\}$. We remark here that $a_{2}(J) = s(s+1)/2$ for the $(2,2s+1)$-torus knot $J$. Also note that there are $k$ ways to choose a vertex $i$. Then we have 
\begin{eqnarray}\label{diff9}
&& \sum_{i}\bigg(
\sum_{\gamma\in \Gamma_{5}(G_{i})}a_{2}\big(f_{k,l}^{(s)}(\gamma)\big) - \sum_{\gamma\in \Gamma_{5}(G_{i})}a_{2}(h(\gamma))
\bigg)\\
&=& k\Big(\frac{s(s+1)}{2}+\frac{s(s-1)}{2}\Big) = s^{2}k. \nonumber
\end{eqnarray}

(2) We put $\gamma_{H} = [1\ n-k\ i\ l+2\ n-(k+1)]$ and $\delta_{H} = [1\ n-k\ l+2\ n-(k+1)\ i]$. Then we can see that each of $f_{k,l}^{(s)}(\gamma_{H})$ and $f_{k,l}^{(s)}(\delta_{H})$ is ambient isotopic to the $(2,2s+1)$-torus knot and $f_{k,l}^{(s)}(\gamma')$ is a trivial knot for any $\gamma'\in \Gamma_{5}(H_{i})\setminus \{\gamma_{H},\delta_{H}\}$. Note that there are $l$ ways to choose a vertex $i$. Then we have 
\begin{eqnarray}\label{diff10}
\sum_{i}\bigg(
\sum_{\gamma\in \Gamma_{5}(H_{i})}a_{2}\big(f_{k,l}^{(s)}(\gamma)\big) - \sum_{\gamma\in \Gamma_{5}(H_{i})}a_{2}(h(\gamma))
\bigg) = s(s+1)l. 
\end{eqnarray}

(3) Note that there are $n-(k+l+4)$ ways to choose a vertex $i$. Then in the same way as (1), we have 
\begin{eqnarray}\label{diff11}
\sum_{i}\bigg(
\sum_{\gamma\in \Gamma_{5}(I_{i})}a_{2}\big(f_{k,l}^{(s)}(\gamma)\big) - \sum_{\gamma\in \Gamma_{5}(I_{i})}a_{2}(h(\gamma))
\bigg) = s^{2}(n-(k+l+4)). 
\end{eqnarray}
Then by combining (\ref{diff9}), (\ref{diff10}) and (\ref{diff11}) with (\ref{diff2}), we have 
\begin{eqnarray}\label{da2}
&& (n-5)!\bigg(\sum_{\gamma\in \Gamma_{n}(K_{n})}a_{2}\big(f^{(s)}_{k,l}(\gamma)\big)
- \sum_{\gamma\in \Gamma_{n}(K_{n})}a_{2}(h(\gamma))\bigg) \\
&=& (n-5)!s\big(
sk+(s+1)l+s(n-(k+l+4))
\big)\nonumber\\
&=& (n-5)!s\big(
s(n-4) + l
\big)\nonumber\\
&=& (n-4)!s^{2} + (n-5)!sl.\nonumber
\end{eqnarray}
Then by (\ref{difference}), (\ref{dlk}) and (\ref{da2}), we have the result. 
\end{proof}

As special cases of Lemma \ref{keylemma}, we also have the following.

\begin{Lemma}\label{keylemma2} 
\begin{enumerate} 
\item If $n$ is an odd integer, then 
\begin{eqnarray*}
\sum_{\gamma\in \Gamma_{n}(K_{n})}a_{2}\big(f^{(s)}_{1,0}(\gamma)\big)
- c_{n} 
\equiv (n-5)! s\pmod{(n-4)!}. 
\end{eqnarray*}
\item If $n$ is an even integer, then 
\begin{eqnarray*}
\sum_{\gamma\in \Gamma_{n}(K_{n})}a_{2}\Big(f^{(s)}_{\frac{n-6}{2},\frac{n-6}{2}}(\gamma)\Big)
- c_{n} 
\equiv (n-5)! s\pmod{(n-4)!}. 
\end{eqnarray*}
\end{enumerate}
\end{Lemma}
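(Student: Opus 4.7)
My plan is to apply Lemma \ref{keylemma} directly and reduce modulo $(n-4)!$. The first summand $(n-4)!\,\sigma(k,l;s)$ already vanishes modulo $(n-4)!$, so each part of Lemma \ref{keylemma2} is equivalent to $(n-5)!\bigl(\tau(k,l;s)-s\bigr)\equiv 0\pmod{(n-4)!}$, which in turn is equivalent to the single integer congruence
\[
\tau(k,l;s)\equiv s\pmod{n-4}.
\]
Thus the whole problem reduces to evaluating $\tau$ at the two prescribed parameter choices and exhibiting divisibility by $n-4$.

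For part (1), I will substitute $k=1$, $l=0$. The contributions from $kl$, $\binom{l}{2}$, and $\binom{k}{2}$ all vanish, leaving
\[
\tau(1,0;s) - s \;=\; s(1-s) + s^{2}\binom{n-5}{2} - s \;=\; \frac{s^{2}\bigl((n-5)(n-6)-2\bigr)}{2}.
\]
The key algebraic observation is the identity $(n-5)(n-6) - 2 = (n-4)(n-7)$, which yields $\tau(1,0;s) - s = s^{2}(n-4)(n-7)/2$. When $n$ is odd, $(n-7)/2$ is an integer, so $n-4$ divides this quantity.

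For part (2), I will set $k = l = (n-6)/2$, which is a nonnegative integer since $n$ is even and at least $6$. Then $n-(k+l+4)=2$, hence $\binom{n-(k+l+4)}{2} = 1$, and $k^{2}+kl+l^{2} = 3k^{2}$; combining the $\binom{k}{2}$ and $\binom{l}{2}$ terms into $(s-1)k(k-1)$ and simplifying, I find that $\tau(k,k;s)/s$ collapses to $(1-s)k(2k+1) + s$, giving
\[
\tau(k,k;s) - s \;=\; s(s-1)\bigl(1 - k(2k+1)\bigr).
\]
Since $2k+1 = n-5$, the same factorization $(n-5)(n-6) - 2 = (n-4)(n-7)$ rewrites this as $-s(s-1)(n-4)(n-7)/2$. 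As $s(s-1)$ is always even, $s(s-1)/2$ is an integer, and $n-4$ divides the expression.

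The manipulations are essentially routine polynomial algebra. The only step that requires care is arranging the simplifications so that the factorization $(n-5)(n-6) - 2 = (n-4)(n-7)$ becomes visible; this is the common engine behind both cases. The parity hypothesis in each part---$n$ odd in (1), making $n-7$ even, and $s(s-1)$ being automatically even in (2)---then absorbs the leftover factor of $2$ in the denominator, and I do not foresee any genuine obstacle beyond this bookkeeping.
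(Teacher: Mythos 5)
Your proposal is correct and follows essentially the same route as the paper: reduce via Lemma \ref{keylemma} to the congruence $\tau(k,l;s)\equiv s\pmod{n-4}$ and exploit the factorization $(n-5)(n-6)-2=(n-4)(n-7)$, with $n$ odd supplying the factor $2$ in part (1) and the parity of $s(s-1)$ supplying it in part (2). All the algebra checks out against the paper's computation.
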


\begin{proof}
(1) By Lemma \ref{keylemma}, we have 
\begin{eqnarray*}
\sum_{\gamma\in \Gamma_{n}(K_{n})}a_{2}\big(f^{(s)}_{1,0}(\gamma)\big)
- c_{n}
&\equiv& (n-5)! \tau(1,0;s) \\
&=& (n-5)!s\bigg(
-(s-1) + s \dbinom{n-5}{2}
\bigg)\\
&=& (n-5)!s\bigg(
\frac{(n-4)(n-7)s}{2} + 1 
\bigg)\\
&=& (n-4)!s^{2}\cdot \frac{n-7}{2} + (n-5)!s \\
&\equiv& (n-5)!s \pmod{(n-4)!}. 
\end{eqnarray*}
(2) By Lemma \ref{keylemma}, we have 
\begin{eqnarray*}
&& \sum_{\gamma\in \Gamma_{n}(K_{n})}a_{2}\bigg(f^{(s)}_{\frac{n-6}{2},\frac{n-6}{2}}(\gamma)\bigg)
- c_{n}\\
&\equiv& (n-5)! \tau\bigg(\frac{n-6}{2},\frac{n-6}{2};s\bigg) \\
&=& (n-5)!s\bigg(
-(s-1)\cdot \frac{3(n-6)^{2}}{4} + (2s-2)\dbinom{\frac{n-6}{2}}{2}+s
\bigg)\\
&=& (n-5)!s\bigg(
-(s-1)\cdot \frac{(n-4)(n-7)}{2} + 1
\bigg)\\
&=& -(n-4)!\cdot\frac{s(s-1)}{2}(n-7) + (n-5)!s \\
&\equiv& (n-5)!s \pmod{(n-4)!}. 
\end{eqnarray*}
\end{proof}

On the other hand, we can always change the value of $\sum_{\gamma\in \Gamma_{n}(K_{n})}a_{2}(f(\gamma))$ by $\pm (n-4)$ as follows.

\begin{Lemma}\label{a2delta} 
For any spatial embedding $g$ of $K_{n}$, there exists a spatial embedding $f$ of $K_{n}$ such that 
\begin{eqnarray*}
\sum_{\gamma\in \Gamma_{n}(K_{n})}a_{2}(f(\gamma)) 
- \sum_{\gamma\in \Gamma_{n}(K_{n})}a_{2}(g(\gamma)) 
= \pm (n-4)!. 
\end{eqnarray*}
\end{Lemma}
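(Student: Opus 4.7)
The plan is to produce $f$ from $g$ by a single local modification supported in a small ball, chosen so that the resulting change $\sum_{\gamma\in\Gamma_{n}(K_{n})}a_{2}(f(\gamma))-\sum_{\gamma\in\Gamma_{n}(K_{n})}a_{2}(g(\gamma))$ is forced to equal $\pm(n-4)!$ regardless of $g$. The combinatorial motivation is that $(n-4)!$ is precisely the number of Hamiltonian cycles of $K_{n}$ containing a fixed 3-edge path $P=\overline{v_{1}v_{2}}\cup\overline{v_{2}v_{3}}\cup\overline{v_{3}v_{4}}$ on four chosen vertices: fixing the consecutive directed segment $v_{1}v_{2}v_{3}v_{4}$ in the cyclic order of a Hamiltonian cycle breaks both the cyclic and the reversal symmetries and leaves exactly $(n-4)!$ arrangements of the remaining $n-4$ vertices. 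This suggests that we look for a local move whose effect on $a_{2}$ selects precisely the Hamiltonian cycles containing $P$.

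Concretely, I would first apply an ambient isotopy to $g$ (which does not change $\sum_{\gamma}a_{2}(g(\gamma))$) so that short interior sub-arcs $\alpha_{i}\subset g(\overline{v_{i}v_{i+1}})$ for $i=1,2,3$ all lie inside a common small ball $B$ disjoint from every vertex, with $B\cap g(K_{n})=\alpha_{1}\sqcup\alpha_{2}\sqcup\alpha_{3}$. Inside $B$ I would replace the trivial $3$-tangle $\alpha_{1}\cup\alpha_{2}\cup\alpha_{3}$ by a Borromean $3$-tangle of chosen orientation $\varepsilon\in\{+1,-1\}$ (equivalently, perform a $C_{2}$-clasper surgery on the three arcs), and take $f$ to be the resulting embedding of $K_{n}$. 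The effect on each Hamiltonian cycle $\gamma$ then depends only on how many edges of $P$ belong to $\gamma$: if $\gamma\supset P$, the knot $f(\gamma)$ traverses $\alpha_{1},\alpha_{2},\alpha_{3}$ consecutively and the Borromean move changes $a_{2}(f(\gamma))$ by exactly $\varepsilon$, independently of the structure of $\gamma$ outside $B$; if $\gamma$ contains at most two edges of $P$, then $f(\gamma)$ enters $B$ via at most two arcs, and the Brunnian property of the Borromean tangle (removing any one strand leaves a trivial $2$-tangle) allows us to ambient-isotope $f(\gamma)$ back to $g(\gamma)$, so $a_{2}$ is unchanged. Summing over all Hamiltonian cycles yields
\[
\sum_{\gamma\in\Gamma_{n}(K_{n})}a_{2}(f(\gamma))-\sum_{\gamma\in\Gamma_{n}(K_{n})}a_{2}(g(\gamma))=\varepsilon\cdot(n-4)!=\pm(n-4)!,
\]
with both signs attainable by the corresponding choice of $\varepsilon$.

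The main technical obstacle is justifying the pair of asserted properties of the Borromean move: (i) for a knot threading all three strands, the change in $a_{2}$ is a fixed $\pm1$ depending only on $\varepsilon$, and (ii) the move is trivial on any one- or two-strand sub-tangle. Both facts follow from the observation that $a_{2}$ is a Vassiliev invariant of degree two and the Borromean insertion is a $C_{2}$-move (a $C_{2}$-clasper surgery in Habiro's sense), but they can also be verified elementarily by iterating the Conway skein relation $a_{2}(L_{+})-a_{2}(L_{-})={\rm lk}(L_{0})$ over the four crossings of a Borromean tangle representative, using the Brunnian structure to force the cancellation of all contributions except the desired $\pm1$. Once these two local properties are in hand, everything else in the argument is routine counting of Hamiltonian cycles and ambient isotopy.
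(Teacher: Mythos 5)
Your proposal is essentially the paper's own proof: the paper also inserts a Borromean-rings ``band sum'' (a delta move in the sense of Murakami--Nakanishi) on three arcs of a fixed length-$3$ path, cites Okada's theorem that this changes $a_{2}$ of each Hamiltonian knot containing the path by exactly $\pm 1$ while the Brunnian property leaves all other cycles unaffected, and multiplies by the same count of $(n-4)!$ Hamiltonian cycles through the path. Your clasper/Vassiliev justification of the two local properties is just a rephrasing of the cited delta-move fact, so the two arguments coincide.
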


\begin{proof} 
A {\it delta move} is a local move on a spatial graph as illustrated in Fig. \ref{Deltamove} \cite{MN89}. It is known that if two oriented knots $J$ and $K$ are transformed into each other by a single delta move, then $a_{2}(J) - a_{2}(K) = \pm 1$ \cite{okada90}. In particular, a `band sum' of Borromean rings as illustrated in Fig. \ref{delta_a2} (1) (resp. (2)) causes $a_{2}$ to increase (resp. decrease) by exactly one. Since there are $(n-4)!$ Hamiltonian cycles of $K_{n}$ containing a fixed path of length $3$, we have the result. 
\end{proof}

\begin{figure}[htbp]
\begin{center}
\scalebox{0.6}{\includegraphics*{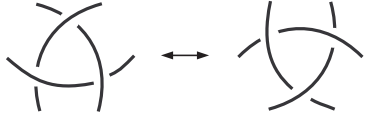}}
\caption{delta move}
\label{Deltamove}
\end{center}
\end{figure}

\begin{figure}[htbp]
\begin{center}
\scalebox{0.55}{\includegraphics*{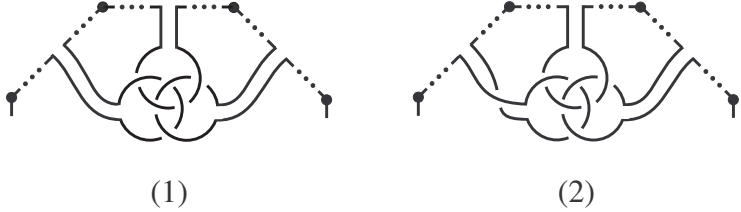}}
\caption{`Band sum' of Borromean rings}
\label{delta_a2}
\end{center}
\end{figure}

\begin{proof}[Proof of Theorem \ref{mainthm}]
We show that for an integer $m=(n-5)!q + r_{n}$, there exists a spatial embedding $f$ of $K_{n}$ such that $\sum_{\gamma\in \Gamma_{n}(K_{n})}a_{2}(f(\gamma)) = m$. Since $c_{n} \equiv r_{n}\pmod{(n-5)!}$ by Theorem \ref{maincor0}, we have that $m=(n-5)!q' + c_{n}$ for some integer $q'$. Here, for this integer $q'$, there uniquely exists an integer $q''$ such that $q' = (n-4)q''+s\ (s\in\{0,1,\ldots,n-5\})$. Thus we have 
\begin{eqnarray*}
m = (n-5)!((n-4)q''+ s) + c_{n} 
= (n-4)!q'' + (n-5)!s + c_{n}. 
\end{eqnarray*}
Then by combining Lemma \ref{keylemma2} with Lemma \ref{a2delta}, there exists a spatial embedding $f$ of $K_{n}$ such that 
\begin{eqnarray*}
\sum_{\gamma\in \Gamma_{n}(K_{n})}a_{2}(f(\gamma))
= (n-4)!q''+(n-5)!s + c_{n} = m. 
\end{eqnarray*}
This completes the proof. 
\end{proof}


%
{\normalsize
}

\end{document}